\renewcommand{\dim}{\mbox{dim}\,}
\newcommand{\lk}{\mbox{lk}\,}
\newtheorem{thm}{Theorem}[section]
\newtheorem{cor}[thm]{Corollary}
\newtheorem{lem}[thm]{Lemma}
\newtheorem{prop}[thm]{Proposition}
\newtheorem{defn}[thm]{Definition}
\newtheorem{exam}[thm]{Example}
\newtheorem{rem}[thm]{Remark}
\numberwithin{equation}{section}
\begin{document}
 \bibliographystyle{amsplain}
 \title{A generalization of $k$-Cohen-Macaulay complexes}

\author{Hassan Haghighi}
\address{Hassan Haghighi\\Department of Mathematics, K. N. Toosi
     University of Technology, Tehran, Iran.}

\author{Siamak Yassemi}
\address{Siamak Yassemi\\School of Mathematics, Statistics \&
Computer Science, University of
Tehran, Tehran Iran.}

\author{Rahim Zaare-Nahandi}
      \address{Rahim Zaare-Nahandi\\School of Mathematics, Statistics \&
      Computer Science, University of Tehran, Tehran, Iran.}

 \thanks{Emails: haghighi@kntu.ac.ir, yassemi@ipm.ir, rahimzn@ut.ac.ir}

\keywords{$k$-Buchsbaum complex}

\subjclass[2000]{05C75, 13H10}

\begin{abstract}

\noindent For a positive integer $k$ and a non-negative integer $t$
a class of simplicial complexes, to be denoted by $k$-${\rm CM}_t$,
is introduced. This class generalizes two notions for simplicial
complexes: being $k$-Cohen-Macaulay and $k$-Buchsbaum. In analogy
with the Cohen-Macaulay and Buchsbaum complexes, we give some
characterizations of ${\rm CM}_t(=$1-${\rm CM}_t)$ complexes, in
terms of vanishing of some homologies of its links and, in terms of
vanishing of some relative singular homologies of the geometric
realization of the complex and its punctured space. We show that a
complex is $k$-${\rm CM}_t$ if and only if the links of its nonempty
faces are $k$-${\rm CM}_{t-1}$. We prove that for an integer $s\le
d$, the $(d-s-1)$-skeleton of a $(d-1)$-dimensional $k$-${\rm CM}_t$
complex is $(k+s)$-${\rm CM}_t$. This result generalizes Hibi's
result for Cohen-Macaulay complexes and Miyazaki's result for
Buchsbaum complexes.
\end{abstract}
\maketitle

\section{Introduction}

Let $K$ be a fixed field. The Stanley-Reisner ring of a simplicial
complex over $K$ provides a "bridge" to transfer properties in
commutative algebra such as being Cohen-Macaulay or Buchsbaum into
simplicial complexes. The main advantage in the study of simplicial
complexes is the interplay between their algebraic, combinatorial,
homological and topological properties. Stanley's book \cite{St-95}
is a suitable  reference for a comprehensive introduction to the
subject. The aim of this paper is to introduce and develop basic
properties of a new class of simplicial complexes, called $k$-${\rm
CM}_t$ complexes, which generalizes two notions for simplicial
complexes: being $k$-Cohen-Macaulay, and being $k$-Buchsbaum.

In Section 2, we introduce ${\rm CM}_t$ complexes and discuss their
basic properties. We show that for a pure simplicial complex
$\Delta$ of dimension $(d-1)$ the following are equivalent, (see
Theorems \ref{1sec1} and \ref{Munkres}):
\begin{itemize}
\item[(i)] $\Delta$ is ${\rm CM}_t$.
\item[(ii)] $\tilde{H}_i(\lk_\Delta(\sigma);K)=0$ for all $\sigma\in\Delta$ with $\#\sigma\ge t$ and $i<d-\#\sigma-1$.
\item[(iii)] $H_i(|\Delta|,|\Delta|\setminus p;K)=0$ for all $p\in|\Delta|\setminus|\Delta_{t-2}|$ and all $i<d-1$,
where $|\Delta|$ is the geometric realization of $\Delta$ and
$\Delta_{t-2}$ is the $(t-2)$-skeleton of $\Delta$.
\end{itemize}

In Section 3, $k$-${\rm CM}_t$ complexes are introduced and some of
their basic properties are studied. We show that a complex is
$k$-${\rm CM}_t$ if and only if the links of its nonempty faces are
$k$-${\rm CM}_{t-1}$ (see Proposition \ref{link1}). We consider a
simplicial complex $\Delta$ and certain faces
$\sigma_1,\cdots,\sigma_\ell$ of $\Delta$ such that
\begin{itemize}
\item[(i)] $\sigma_i\cup\sigma_j\notin\Delta$ if $i\neq j$.
\item[(ii)] If $\Delta_1=\{\tau\in\Delta|\tau\nsupseteq\sigma_i\,\,\mbox{for all $i$}\,\}$ then $\dim\Delta_1<\dim\Delta$.
\end{itemize}
In \cite{Hi} Hibi showed that that $\Delta_1$ is 2-Cohen-Macaulay of
dimension $(\dim\Delta-1)$ provided that $\Delta$ is Cohen-Macaulay
and  $\lk_\Delta(\sigma_i)$ is 2-Cohen-Macaulay for all $i$. In
\cite{Mi} Miyazaki extended this result for Buchsbaumness by showing
that if $\Delta$ is a Buchsbaum complex of dimension $d-1$, and
$\lk_\Delta(\sigma_i)$ is 2-Cohen-Macaulay for all $i$, then
$\Delta_1$ is 2-Buchsbaum. We prove that a  similar result is valid
for ${\rm CM}_t$ complexes (see Theorem \ref{delta1}). This leads to
prove that for an integer $s\le d$, the $(d-s-1)$-skeleton of a
$(d-1)$-dimensional $k$-${\rm CM}_t$ complex is $(k+s)$-${\rm CM}_t$
(see Corollary \ref{3sec3}). This generalizes a result of Terai and
Hibi \cite{TH} (also see \cite{Ba}) which asserts that the
1-skeleton of of a simplicial $(d-1)$-sphere with $d\ge 2$ is
$d$-connected. It also generalizes a result of Hibi \cite{Hi} (see
\cite[Introduction]{Mi}) which says that if $\Delta$ is a
Cohen-Macaulay complex of dimension $d-1$, then the $(d-2)$-skeleton
of $\Delta$ is 2-Cohen-Macaulay.

\section{The ${\rm CM}_t$ simplicial complexes}

In this section we introduce ${\rm CM}_t$ complexes and discuss
their basic properties. We give some characterizations of ${\rm
CM}_t$ complexes, in terms of vanishing of some homologies of its
links (see Theorem \ref{1sec1}), and, in terms of vanishing of some
relative singular homologies of the geometric realization of the
complex and its punctured space (see Theorem \ref{Munkres}). First recall that for any face $\sigma$ of the simplicial complex $\Delta$, the link of $\sigma$ is defined as follows:
$$\lk_\Delta(\sigma)=\{\tau\in\Delta|\tau\cup\sigma\in\Delta, \tau\cap\sigma=\varnothing\}.$$

\begin{defn}
Let $K$ be a field, $\Delta$ a simplicial complex of dimension
$(d-1)$ over $K$. Let $t$ be an integer $0\le t\le d-1$. Then
$\Delta$ is called ${\rm CM}_t$ over $K$ if $\Delta$ is pure and
$\lk_\Delta(\sigma)$ is Cohen-Macaulay over $K$ for any
$\sigma\in\Delta$ with $\#\sigma\ge t$.
\end{defn}

We will adopt the convention that for $t\le 0$, ${\rm CM}_t$ means
${\rm CM}_0$. Note that from the results by Reisner \cite{Re} and
Schenzel \cite{Sc} it follows that ${\rm CM}_{0}$ is the same as
Cohen-Macaulayness and ${\rm CM}_1$ is identical with Buchsbaum
property. It is also clear that for any $j\ge i$, ${\rm CM}_i$
implies ${\rm CM}_j$.

\begin{exam}\label{ex}

Let $\Delta$ be the union of two $(d-1)$-simplicies that intersect
in a $(t-2)$-dimensional face where $1\le t \le d-1$. Then $\Delta$
is a ${\rm CM}_t$ complex which is not a ${\rm CM}_{t-1}$ complex.
In fact, if $\Gamma$ is a finite union of $(d-1)$-simplicies where
any two of them intersect in a face of dimension at most $t-2$, then
$\Gamma$ is a ${\rm CM}_t$ complex, and if at least two of the
simplicies have a $(t-2)$-dimensional face in common, then $\Gamma$
is not ${\rm CM}_{t-1}$. These include simplicial complexes
corresponding to the transversal monomial ideals which happen to
have linear resolutions \cite{Z-Z}.

\end{exam}

It is known that the links of a Cohen-Macaulay simplicial complex
are also Cohen-Macaulay, see \cite{Ho}. As the first result of this
section we show that a similar property holds for ${\rm CM}_t$
complexes. In the rest of this paper we freely use the following fact:\\
For all $\sigma\in\Delta$ and all $\tau\in\lk_\Delta(\sigma)$,
$$\lk_{\lk_\Delta(\sigma)}(\tau)=\lk_\Delta(\sigma\cup\tau).$$

\begin{lem}\label{link-link}
Let $\Delta$ be a simplicial complex. Then the following are
equivalent.
\begin{itemize}

\item[(i)] $\Delta$ is a ${\rm CM}_t$ complex.

\item[(ii)] $\Delta$ is pure and $\lk_\Delta(\{x\})$ is ${\rm CM}_{t-1}$ for all $\{x\}\in\Delta$.

\end{itemize}

\end{lem}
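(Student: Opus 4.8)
The plan is to prove the equivalence of the two characterizations of the ${\rm CM}_t$ property using the key link-of-link identity $\lk_{\lk_\Delta(\sigma)}(\tau)=\lk_\Delta(\sigma\cup\tau)$ that the authors have just recalled, together with an inductive unwinding of the definition. The essential idea is that the definition of ${\rm CM}_t$ quantifies over faces $\sigma$ of cardinality $\ge t$, and such a face can always be split off one vertex $x\in\sigma$, writing $\sigma=\{x\}\cup\tau$ where $\tau$ is a face of $\lk_\Delta(\{x\})$ of cardinality $\ge t-1$.

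For the implication (i) $\Rightarrow$ (ii), I would first note that if $\Delta$ is ${\rm CM}_t$ it is pure by definition, so $\lk_\Delta(\{x\})$ is pure of the correct dimension $d-2$ for each vertex $x$. To check that $\lk_\Delta(\{x\})$ is ${\rm CM}_{t-1}$, I must verify that for every face $\tau\in\lk_\Delta(\{x\})$ with $\#\tau\ge t-1$ the link $\lk_{\lk_\Delta(\{x\})}(\tau)$ is Cohen-Macaulay. By the link-of-link identity this link equals $\lk_\Delta(\{x\}\cup\tau)$, and since $x\notin\tau$ we have $\#(\{x\}\cup\tau)=\#\tau+1\ge t$, so Cohen-Macaulayness follows directly from the hypothesis that $\Delta$ is ${\rm CM}_t$. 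One must handle the degenerate case $t\le 1$ (where ${\rm CM}_{t-1}$ collapses to ${\rm CM}_0$, i.e. Cohen-Macaulayness) separately, and here I would invoke the cited fact from \cite{Ho} that the links of a Cohen-Macaulay complex are Cohen-Macaulay.

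For the converse (ii) $\Rightarrow$ (i), $\Delta$ is assumed pure, so it only remains to show $\lk_\Delta(\sigma)$ is Cohen-Macaulay for every $\sigma\in\Delta$ with $\#\sigma\ge t$. If $\sigma=\varnothing$ is allowed (only when $t=0$), then $\lk_\Delta(\varnothing)=\Delta$ itself must be Cohen-Macaulay, which again follows from \cite{Ho} applied to the hypothesis. Otherwise $\sigma$ is nonempty, so pick a vertex $x\in\sigma$ and write $\sigma=\{x\}\cup\tau$ with $\tau\in\lk_\Delta(\{x\})$ and $\#\tau=\#\sigma-1\ge t-1$. The link-of-link identity gives $\lk_\Delta(\sigma)=\lk_{\lk_\Delta(\{x\})}(\tau)$, and since $\lk_\Delta(\{x\})$ is ${\rm CM}_{t-1}$ by hypothesis, this link is Cohen-Macaulay. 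This completes the argument.

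**The main obstacle** I anticipate is purely bookkeeping at the boundary of the index range, namely the interaction between the cardinality convention and the authors' stated convention that ${\rm CM}_t={\rm CM}_0$ for $t\le 0$. When $t=1$ the property ${\rm CM}_{t-1}={\rm CM}_0$ is Cohen-Macaulayness, and one must make sure the cardinality shift $\#\tau\ge t-1=0$ correctly recovers \emph{all} faces of the link (including $\tau=\varnothing$), so that the link-of-link reduction genuinely reproduces the full Cohen-Macaulay condition rather than a strictly weaker one. Verifying that the two conventions mesh at this edge, and confirming dimension/purity is preserved under taking vertex-links, is where I would spend the most care; the homological content is entirely carried by the elementary set-theoretic identity for iterated links.
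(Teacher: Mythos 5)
Your argument for the main case is exactly the paper's proof: both directions rest on writing $\sigma=\{x\}\cup\tau$ for a vertex $x\in\sigma$ and applying the identity $\lk_{\lk_\Delta(\{x\})}(\tau)=\lk_\Delta(\{x\}\cup\tau)$, with purity passed between $\Delta$ and its vertex links. For $t\ge 1$ your write-up is correct and complete, and matches the paper step for step.

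There is, however, a genuine error in your treatment of the boundary case $t=0$ in (ii)$\Rightarrow$(i). You assert that when $\sigma=\varnothing$ is allowed, the Cohen-Macaulayness of $\Delta=\lk_\Delta(\varnothing)$ ``follows from \cite{Ho} applied to the hypothesis.'' It does not: the result of Hochster cited in the paper goes in the opposite direction (links of a Cohen-Macaulay complex are Cohen-Macaulay), and the implication you actually need is false. A triangulation of the $2$-torus is pure and every vertex link is a circle, hence Cohen-Macaulay, so it satisfies hypothesis (ii) for $t=0$ (under the paper's convention ${\rm CM}_{-1}={\rm CM}_{0}$); yet the torus is not Cohen-Macaulay, since $\tilde{H}_1(\Delta;K)\neq 0$ while $\dim\Delta=2$. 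So the equivalence itself fails at $t=0$, and no citation can repair that step. The lemma must be read as a statement for $t\ge 1$ --- which is how the paper's own proof implicitly reads it, since choosing $x\in\sigma$ presupposes $\sigma\neq\varnothing$ --- and the correct resolution of the edge case is to exclude it, not to argue it via \cite{Ho}. Your handling of the other boundary, $t\le 1$ in (i)$\Rightarrow$(ii), is harmless but unnecessary: there the uniform splitting argument already covers all cases, because $\#(\{x\}\cup\tau)\ge 1\ge t$ holds automatically.
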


\begin{proof}

(i)$\Rightarrow$(ii). Let $\{x\}\in\Delta$ and
$\tau\in\lk_\Delta(\{x\})$ with $\#\tau\ge t-1$. Since $\Delta$ is
${\rm CM}_t$ and $\#(\{x\}\cup\tau)\ge t$ we see that
$\lk_{\lk_\Delta(\{x\})}(\tau)=\lk_\Delta(\{x\}\cup\tau)$ is
Cohen-Macaulay. In addition, since $\Delta$ is pure it follows that
$\lk_\Delta(\{x\})$ is pure for all $x\in\Delta$.

(ii)$\Rightarrow$(i). Let $\sigma\in\Delta$ with $\#\sigma\ge t$.
Let $x\in\sigma$, $\tau=\sigma\setminus\{x\}$. Then $\#\tau\ge t-1$
and
$\lk_\Delta\sigma=\lk_\Delta(\{x\}\cup\tau)=\lk_{\lk_\Delta(\{x\})}(\tau)$
is Cohen-Macaulay.

\end{proof}

We recall Reisner's characterization of Cohen-Macaulay simplicial complexes \cite[Theorem 1]{Re}.

\begin{thm}

Let $\Delta$ be a simplicial complex of dimension $(d-1)$. Then the following are equivalent:

\begin{itemize}

\item[(i)] $\Delta$ is Cohen-Macaulay over $K$.

\item[(ii)] $\tilde{H}_i(\lk_\Delta(\sigma);K)=0$ for any $\sigma\in\Delta$ and all $i<\dim(\lk_\Delta(\sigma))$.

\end{itemize}

\end{thm}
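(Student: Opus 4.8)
The plan is to transfer everything to the Stanley--Reisner ring $R=K[\Delta]=K[x_1,\dots,x_n]/I_\Delta$, where $n$ is the number of vertices and $I_\Delta$ is the nonface ideal, and to read both conditions off the local cohomology modules $H^i_{\fm}(R)$ with respect to the irrelevant maximal ideal $\fm$. Since $R$ is a standard graded $K$-algebra of Krull dimension $\dim R=\dim\Delta+1=d$, condition (i) is by definition the Cohen--Macaulayness of $R$, and by Grothendieck's vanishing and nonvanishing theorems this is equivalent to
\[
\depth R=\dim R=d,\qquad\text{equivalently}\qquad H^i_{\fm}(R)=0\ \text{for all }i<d,
\]
because $\depth R=\min\{i:H^i_{\fm}(R)\neq 0\}$ and $\dim R=\max\{i:H^i_{\fm}(R)\neq 0\}$, with $H^i_{\fm}(R)=0$ for $i>d$.

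The decisive input is Hochster's formula, which computes the $\mathbb{Z}^n$-graded components of $H^i_{\fm}(R)$ in terms of the reduced simplicial homology of links; in the form I would use it, this says that for each $i$,
\[
H^i_{\fm}(R)\neq 0\quad\Longleftrightarrow\quad \tilde{H}_{\,i-\#\sigma-1}(\lk_\Delta(\sigma);K)\neq 0\ \text{for some }\sigma\in\Delta .
\]
Granting this, I would set $j=\#\sigma$ and $\ell=i-j-1$ and rewrite the vanishing ``$H^i_{\fm}(R)=0$ for all $i<d$'' as ``$\tilde{H}_{\ell}(\lk_\Delta(\sigma);K)=0$ for every $\sigma\in\Delta$ and every $\ell$ with $\ell+j+1<d$.'' Using $\dim(\lk_\Delta(\sigma))=(d-1)-j$, the constraint $\ell+j+1<d$ is precisely $\ell<\dim(\lk_\Delta(\sigma))$, so this is exactly (ii); the empty face contributes the term $\tilde{H}_{i-1}(\Delta;K)$ and is subsumed by the same reindexing.

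Two bookkeeping points would then finish the equivalence. First, the degrees $i\ge d$ place no condition in (ii): the value $i=d$ corresponds to $\ell=\dim(\lk_\Delta(\sigma))$, and $\tilde{H}_\ell(\lk_\Delta(\sigma);K)$ vanishes automatically for $\ell>\dim(\lk_\Delta(\sigma))$, so discarding these degrees loses nothing. Second, one should note that either condition forces $\Delta$ to be pure, equivalently $R$ to be equidimensional, which is what legitimizes the identity $\dim(\lk_\Delta(\sigma))=(d-1)-\#\sigma$ used above. I expect the only genuinely hard part to be Hochster's formula itself; with that established the theorem reduces to the degree shift $i\leftrightarrow(\sigma,\,i-\#\sigma-1)$ and elementary manipulation of homological indices.
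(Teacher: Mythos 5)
The paper itself gives no proof of this statement: it is quoted as background from Reisner \cite{Re}, so there is no internal argument to compare yours against, and your proposal must stand on its own. Your strategy is the standard modern proof (the one found in Stanley's and Bruns--Herzog's books): identify Cohen--Macaulayness of $K[\Delta]$ with the vanishing $H^i_{\fm}(K[\Delta])=0$ for $i<d$ via Grothendieck's theorems, then translate that vanishing into reduced homology of links via Hochster's formula. Your index bookkeeping is correct: the substitution $\ell=i-\#\sigma-1$, the observation that the empty face contributes $\tilde{H}_{i-1}(\Delta;K)$, and the remark that degrees $i\ge d$ impose nothing are all fine.

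There is, however, one genuine gap, and it sits exactly at the point you wave off as bookkeeping. Hochster's formula converts ``$H^i_{\fm}=0$ for all $i<d$'' into ``$\tilde{H}_\ell(\lk_\Delta(\sigma);K)=0$ for all $\sigma\in\Delta$ and all $\ell<d-\#\sigma-1$,'' whereas condition (ii) of the theorem only gives vanishing for $\ell<\dim(\lk_\Delta(\sigma))$. These ranges coincide precisely when $\dim(\lk_\Delta(\sigma))=d-\#\sigma-1$ for every $\sigma$, i.e.\ when $\Delta$ is pure; on a non-pure complex, (ii) is strictly weaker (it says nothing about the top homology of a too-small link, which is typically nonzero). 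You assert that ``either condition forces $\Delta$ to be pure.'' For (i) this is standard: a Cohen--Macaulay graded ring is equidimensional, and the minimal primes of $K[\Delta]$ correspond to facets. But for (ii) it is a lemma that your implication (ii)$\Rightarrow$(i) requires and never establishes, and it is the only non-formal step remaining once Hochster's formula is granted. It can be supplied, for instance, by induction on $\dim\Delta$: every vertex link again satisfies (ii) (links of links are links), hence is pure by induction, so the common dimension of the facets through a vertex is a well-defined function on vertices; and $\tilde{H}_0(\Delta;K)=0$ (the case $\sigma=\varnothing$ of (ii), when $d\ge 2$) makes $\Delta$ connected, which forces that function to be constant. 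Alternatively, one can first prove the theorem with (ii) replaced by the bound $i<d-\#\sigma-1$, where purity is automatic: a facet $F$ with $\#F<d$ has $\lk_\Delta(F)=\{\varnothing\}$ and $\tilde{H}_{-1}(\{\varnothing\};K)=K\neq 0$, violating the condition. It is worth noting that the paper's own generalization (Theorem \ref{Reisner}) adds ``$\Delta$ is pure'' as an explicit hypothesis in its condition (ii) and writes the bound as $d-\#\sigma-1$ --- precisely sidestepping the subtlety your sketch leaves unresolved.
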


In analogy with the above result, the following theorem provides
equivalent conditions for ${\rm CM}_t$ complexes.

\begin{thm}\label{Reisner}

Let $\Delta$ be a simplicial complex of dimension $(d-1)$. Then the
following are equivalent:

\begin{itemize}\label{1sec1}

\item[(i)] $\Delta$ is ${\rm CM}_t$ over $K$;

\item[(ii)] $\Delta$ is pure and $\tilde{H}_i(\lk_\Delta(\sigma);K)=0$ for all $\sigma\in\Delta$ with $\#\sigma\ge t$ and $i<d-\#\sigma-1$.

\end{itemize}

\end{thm}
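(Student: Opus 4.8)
The plan is to deduce both implications from Reisner's criterion, using two structural facts that follow from purity: first, if $\Delta$ is pure of dimension $d-1$ then every link $\lk_\Delta(\rho)$ is again pure, of dimension exactly $d-\#\rho-1$; and second, the link-of-link identity $\lk_{\lk_\Delta(\sigma)}(\tau)=\lk_\Delta(\sigma\cup\tau)$ recalled just before Lemma \ref{link-link}. Once these are in hand, the entire argument reduces to matching the homological degree bound $i<d-\#\sigma-1$ appearing in (ii) with the dimension bound $i<\dim(\lk)$ appearing in Reisner's theorem.

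For (i)$\Rightarrow$(ii), assume $\Delta$ is ${\rm CM}_t$, so in particular $\Delta$ is pure. Fix $\sigma\in\Delta$ with $\#\sigma\ge t$; by definition $\lk_\Delta(\sigma)$ is Cohen-Macaulay. Applying Reisner's criterion to the complex $\lk_\Delta(\sigma)$ at its empty face yields $\tilde{H}_i(\lk_\Delta(\sigma);K)=0$ for all $i<\dim(\lk_\Delta(\sigma))$. Since $\Delta$ is pure, $\dim(\lk_\Delta(\sigma))=d-\#\sigma-1$, which is precisely the asserted vanishing range.

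For (ii)$\Rightarrow$(i), purity of $\Delta$ is part of the hypothesis, so it remains to prove that $\lk_\Delta(\sigma)$ is Cohen-Macaulay whenever $\#\sigma\ge t$. I would verify this via Reisner's criterion applied to $\Gamma=\lk_\Delta(\sigma)$: for every face $\tau\in\Gamma$ one must check that $\tilde{H}_i(\lk_\Gamma(\tau);K)=0$ for $i<\dim(\lk_\Gamma(\tau))$. By the link-of-link identity, $\lk_\Gamma(\tau)=\lk_\Delta(\sigma\cup\tau)$, and since $\sigma$ and $\tau$ are disjoint, $\#(\sigma\cup\tau)=\#\sigma+\#\tau\ge\#\sigma\ge t$. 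Thus hypothesis (ii) applies to the face $\sigma\cup\tau$, giving $\tilde{H}_i(\lk_\Delta(\sigma\cup\tau);K)=0$ for $i<d-\#(\sigma\cup\tau)-1$. Invoking purity once more, $\dim(\lk_\Gamma(\tau))=\dim(\lk_\Delta(\sigma\cup\tau))=d-\#(\sigma\cup\tau)-1$, so the vanishing holds in exactly the range Reisner requires. Hence $\Gamma=\lk_\Delta(\sigma)$ is Cohen-Macaulay, and $\Delta$ is ${\rm CM}_t$.

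The individual steps are short, so there is no single deep obstacle; the point demanding care is the repeated use of purity to guarantee the equality $\dim(\lk_\Delta(\rho))=d-\#\rho-1$, since Reisner's criterion is phrased in terms of the genuine dimension of a link rather than the codimension-type quantity $d-\#\rho-1$ used in (ii). Without purity these two numbers may disagree and the formulations would cease to coincide; ensuring this alignment, and noting that the empty face $\tau=\varnothing$ recovers the base case $\sigma\cup\varnothing=\sigma$, is the only subtlety.
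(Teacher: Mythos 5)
Your proof is correct, but your argument for (ii)$\Rightarrow$(i) follows a genuinely different route from the paper's. The paper proceeds by induction on $t$: the base case $t=1$ is delegated to Schenzel's characterization of Buchsbaum complexes \cite[Theorem 3.2]{Sc}, and the inductive step descends to vertex links, showing that condition (ii) for $\Delta$ implies condition (ii) with parameter $t-1$ for each $\lk_\Delta(\{x\})$, and then invoking Lemma \ref{link-link}. You instead verify Reisner's criterion directly for each $\Gamma=\lk_\Delta(\sigma)$ with $\#\sigma\ge t$: every link $\lk_\Gamma(\tau)$ equals $\lk_\Delta(\sigma\cup\tau)$ with $\#(\sigma\cup\tau)\ge t$, so hypothesis (ii) supplies the vanishing, and purity makes the range $i<d-\#(\sigma\cup\tau)-1$ coincide with the range $i<\dim(\lk_\Gamma(\tau))$ that Reisner requires. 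Your version is shorter, avoids induction entirely, and rests only on Reisner's theorem together with the link-of-link identity, whereas the paper's version additionally requires Schenzel's theorem (i.e., the identification of ${\rm CM}_1$ with Buchsbaumness); what the paper's route buys is that it mirrors the recursive structure of ${\rm CM}_t$ established in Lemma \ref{link-link}, which is the tool reused throughout Section 3. One further point in your favor: your insistence on the \emph{equality} $\dim(\lk_\Delta(\rho))=d-\#\rho-1$ for pure $\Delta$ is exactly what is needed, and it quietly repairs a small slip in the paper's (i)$\Rightarrow$(ii), where the inequality $\dim(\lk_\Delta(\sigma))\le d-\#\sigma-1$ is written although vanishing for $i<\dim(\lk_\Delta(\sigma))$ only yields the asserted range when equality holds, as purity guarantees.
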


\begin{proof}

(i)$\Rightarrow$(ii). Suppose that $\Delta$ is ${\rm CM}_t$ over
$K$. Then $\Delta$ is pure and $\lk_\Delta(\sigma)$ is
Cohen-Macaulay for all $\sigma\in\Delta$ with $\#\sigma\ge t$.
Therefore, $\tilde{H}_i(\lk_{\lk_\Delta(\sigma)}(\tau);K)=0$ for all
$\tau\in\lk_\Delta(\sigma)$ and all
$i<\dim(\lk_{\lk_\Delta(\sigma)}(\tau))$. In particular, for $\tau =
\varnothing$, $\lk_{\lk_\Delta(\sigma)}(\varnothing)=
\lk_\Delta(\sigma)$ and we have
$\tilde{H}_i(\lk_\Delta(\sigma);K)=0$ for all
$i<\dim(\lk_\Delta(\sigma))\le d-\#\sigma-1$.

(ii)$\Rightarrow$(i). We use induction on $t$. Use \cite[Theorem
3.2]{Sc} for the case $t=1$. Assume that the assertion holds for
$t-1$. Let $\{x\}\in\Delta$, $\tau\in\lk_\Delta\{x\}$ with
$\#\tau\ge t-1$. Then by purity of $\Delta$,
$\dim\lk_\Delta\{x\}=d-2$. But $\tau\cup\{x\}\in\Delta$ and hence by
(ii), $\tilde{H}_i(\lk_\Delta(\tau\cup\{x\});K)=0$ for all
$i<d-\#\tau-2$. This implies that
$\tilde{H}_i(\lk_{\lk_\Delta(\{x\})}(\tau;K)=0$ for all
$\tau\in\lk_\Delta\{x\}$ with $\#\tau\ge t-1$, and all
$i<d-1-\#\tau-1$. By induction hypothesis $\lk_\Delta\{x\}$ is ${\rm
CM}_{t-1}$ for all $\{x\}\in\Delta$. Now by Lemma \ref{link-link} we
are done.

\end{proof}

We state a result due to Munkres \cite[Corollary 3.4]{Mu} which
states that Cohen-Macaulayness is a topological property.

\begin{thm}
Let $\Delta$ be a pure simplicial complex of dimension $(d-1)$. Then
the following are equivalent:

\begin{itemize}

\item[(i)] $\Delta$ is Cohen-Macaulay over $K$.

\item[(ii)] $\tilde{H}_i(|\Delta|;K)=0=H_i(|\Delta|,|\Delta|\setminus p;K)$ for all $p\in|\Delta|$ and all $i<d-1$, where $|\Delta|$ is
the geometric realization of $\Delta$.

\end{itemize}

\end{thm}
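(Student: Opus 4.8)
The plan is to deduce the theorem from Reisner's criterion (recalled just above) by means of a single topological computation relating the local homology of $|\Delta|$ at a point to the reduced homology of a link. Since $\Delta$ is assumed pure, $\dim\lk_\Delta(\sigma)=d-\#\sigma-1$ for every face $\sigma$, so Reisner's vanishing range $i<\dim\lk_\Delta(\sigma)$ becomes $i<d-\#\sigma-1$; the whole proof is essentially the translation of this range across the two descriptions of homology.

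The crux, which I would isolate as a preliminary lemma, is the identification
$$H_i(|\Delta|,|\Delta|\setminus p;K)\cong\tilde{H}_{i-\#\sigma}(\lk_\Delta(\sigma);K)$$
valid for every $p$ in the relative interior of a face $\sigma\in\Delta$ and every $i$. To prove it I would use that a neighborhood of such a $p$ in $|\Delta|$ is homeomorphic to the product of $\mathbb{R}^{\dim\sigma}$ (the interior of $\sigma$) with an open cone on $|\lk_\Delta(\sigma)|$; excising the complement of this neighborhood reduces the local homology to that of the cone pair, and the cone/suspension isomorphism together with the long exact sequence of the pair then produces a degree shift by $\dim\sigma+1=\#\sigma$, delivering the reduced homology of the link. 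This local product structure of $|\Delta|$ is the main obstacle and is precisely the geometric input behind Munkres' result.

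Granting the lemma, the implication (i)$\Rightarrow$(ii) is immediate. If $\Delta$ is Cohen-Macaulay then Reisner gives $\tilde{H}_i(\lk_\Delta(\sigma);K)=0$ for $i<d-\#\sigma-1$; taking $\sigma=\varnothing$ yields $\tilde{H}_i(|\Delta|;K)=0$ for $i<d-1$, and for $p$ in the interior of a face $\sigma$ the substitution $j=i-\#\sigma$ turns the inequality $i<d-1$ into $j<d-\#\sigma-1=\dim\lk_\Delta(\sigma)$, so the lemma forces $H_i(|\Delta|,|\Delta|\setminus p;K)=0$.

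For (ii)$\Rightarrow$(i) I would verify Reisner's condition face by face. The case $\sigma=\varnothing$ is the hypothesis $\tilde{H}_i(|\Delta|;K)=0$ for $i<d-1$. For nonempty $\sigma$, choosing $p$ in its relative interior, the lemma identifies $\tilde{H}_j(\lk_\Delta(\sigma);K)$ with $H_{j+\#\sigma}(|\Delta|,|\Delta|\setminus p;K)$, which vanishes once $j+\#\sigma<d-1$, that is, for all $j<\dim\lk_\Delta(\sigma)$. This is exactly Reisner's criterion, so $\Delta$ is Cohen-Macaulay and the proof is complete.
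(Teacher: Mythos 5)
Your proposal is correct and is essentially the paper's own approach: the paper quotes this statement from Munkres without reproving it, but its proof of the ${\rm CM}_t$ generalization (Theorem \ref{Munkres}) runs exactly your argument, combining Reisner's criterion with the local-homology isomorphism $H_i(|\Delta|,|\Delta|\setminus p;K)\cong\tilde{H}_{i-\#\sigma}(\lk_\Delta(\sigma);K)$ for $p$ interior to $\sigma$, which is \cite[Lemma 3.3]{Mu}. The only remark is that this isomorphism, which you sketch via excision and the cone/suspension degree shift, is precisely Munkres' cited lemma, so your ``preliminary lemma'' needs no new proof beyond that reference.
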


The following theorem may lead one to believe that the property ${\rm CM}_t$ is also a topological invariant.

\begin{thm}\label{Munkres}
Let $\Delta$ be a pure simplicial complex of dimension $(d-1)$. Then
the following are equivalent:

\begin{itemize}

\item[(i)] $\Delta$ is ${\rm CM}_t$ over $K$;

\item[(ii)]  $H_i(|\Delta|,|\Delta|\setminus p;K)=0$ for all $p\in|\Delta|\setminus|\Delta_{t-2}|$ and all
$i<d-1$,
where $\Delta_{t-2}$ is the $(t-2)$-skeleton of $\Delta$ and
$|\Delta_{t-2}|$ is induced from a fixed geometric realization of
$\Delta$.

\end{itemize}

\end{thm}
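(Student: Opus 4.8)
The plan is to show that condition (ii) is, after a standard reindexing, literally condition (ii) of Theorem \ref{Reisner}, and then to invoke that theorem.

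The key topological input I would use is the identification of local homology with link homology: if $p$ lies in the relative interior of a face $\sigma\in\Delta$, then
$$H_i(|\Delta|,|\Delta|\setminus p;K)\cong\tilde H_{\,i-\#\sigma}(\lk_\Delta(\sigma);K)$$
naturally in $i$, for every $p$ in that relative interior. To establish this I would excise the complement of a small neighborhood of $p$ and use the local product structure of $|\Delta|$ near $p$, namely that such a neighborhood is homeomorphic to $\mathbb{R}^{\#\sigma-1}\times\mathrm{Cone}(|\lk_\Delta(\sigma)|)$ carrying $p$ to the pair (origin, cone point). The relative homology of this model, computed by excision and the contractibility of the cone, produces the degree shift by $\#\sigma$; here one uses the convention $\tilde H_{-1}(\varnothing;K)=K$ to cover the case in which $\sigma$ is a facet and the link is the empty complex. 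This is precisely the face-by-face refinement of the Munkres result recalled before the statement.

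Next I would translate the index set of (ii). Each point of $|\Delta|$ lies in the relative interior of a unique face, and $p\in|\Delta|\setminus|\Delta_{t-2}|$ exactly when that face $\sigma$ has $\dim\sigma\ge t-1$, i.e. $\#\sigma\ge t$. Thus ranging over $p\in|\Delta|\setminus|\Delta_{t-2}|$ amounts to ranging over faces $\sigma$ with $\#\sigma\ge t$, the choice of interior point being irrelevant by the displayed isomorphism. Substituting and writing $j=i-\#\sigma$, the vanishing in (ii) for all $i<d-1$ becomes $\tilde H_j(\lk_\Delta(\sigma);K)=0$ for all $\sigma$ with $\#\sigma\ge t$ and all $j<d-\#\sigma-1$. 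Since $\Delta$ is pure we have $\dim\lk_\Delta(\sigma)=d-\#\sigma-1$, so this is exactly condition (ii) of Theorem \ref{Reisner}; that theorem then yields the equivalence of this vanishing with $\Delta$ being ${\rm CM}_t$, which is (i).

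The main obstacle is the first step: setting up the local homology isomorphism with the right degree shift and naturality, and disposing of the boundary behaviour cleanly — especially the $\tilde H_{-1}$ convention for an empty link when $\sigma$ is a facet, and the verification that the isomorphism does not depend on the chosen interior point $p$, which is what legitimizes passing from a quantifier over points to a quantifier over faces. After that the argument is formal bookkeeping, the dimension identity $\dim\lk_\Delta(\sigma)=d-\#\sigma-1$ being exactly where purity enters.
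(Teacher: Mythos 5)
Your proposal is correct and follows essentially the same route as the paper: both reduce condition (ii) to condition (ii) of Theorem~\ref{Reisner} via the isomorphism $H_i(|\Delta|,|\Delta|\setminus p;K)\cong\tilde H_{i-\#\sigma}(\lk_\Delta(\sigma);K)$ for $p$ interior to $\sigma$, together with the observation that the points of $|\Delta|\setminus|\Delta_{t-2}|$ are exactly those whose carrier $\sigma$ satisfies $\#\sigma\ge t$. The only difference is that the paper simply cites this isomorphism as \cite[Lemma 3.3]{Mu}, whereas you sketch its proof by excision and the local cone structure; this is an inlined proof of the cited lemma, not a different argument.
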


\begin{proof} First note that by Theorem \ref{Reisner}, $\Delta$ is ${\rm CM}_t$ if and only if $\tilde{H}_i(\lk_\Delta(\sigma);K)=0$
for all $\sigma\in\Delta$ with $\#\sigma\ge t$ and all
$i<d-\#\sigma-1$. Now by \cite[Lemma 3.3]{Mu}, for any interior
point $p$ of $\sigma$ we have
$$H_i(|\Delta|,|\Delta|\setminus
p;K)\cong\tilde{H}_{i-\#\sigma}(\lk_\Delta(\sigma);K).$$ Therefore,
$H_i(|\Delta|,|\Delta|\setminus p;K)=0$ for any $\sigma\in\Delta$
with $\#\sigma\ge t$ and any interior point of $\sigma$, and, any
$i<d-1$ if and only if $\tilde{H}_i(\lk_\Delta(\sigma);K)=0$ for all
$\sigma\in\Delta$ with $\#\sigma\ge t$ and $i<d-\#\sigma-1$. But the
set of such points is precisely $|\Delta|\setminus|\Delta_{t-2}|$
when some geometric realization is fixed.

\end{proof}

Let $\Delta$ and $\Delta'$ be a two simplicial complex whose vertex
sets are disjoint. The simplicial join $\Delta *\Delta'$ is defined
to be the simplicial complex whose faces are of the form
$\sigma\cup\sigma'$ where $\sigma\in\Delta$ and $\sigma'\in\Delta'$.

The algebraic and combinatorial properties of the simplicial join
$\Delta *\Delta'$ through the properties of $\Delta$ and $\Delta'$
have been studied by a number of authors (see \cite{BWW},
\cite{Fro}, \cite{PB}, and \cite{AW}). For instance, in~\cite{Fro},
Fr\"oberg used the (graded) $K$-algebra isomorphism $K[\Delta
*\Delta']\simeq K[\Delta]\otimes_K K[\Delta']$, and proved that the
tensor product of two graded $K$-algebras is Cohen-Macaulay (resp.
Gorenstein) if and only if both of them are Cohen-Macaulay (resp.
Gorenstein). One can see that the simplicial join of the
triangulation of a cylinder (which is Buchsbaum ~\cite[Example
II.2.13(i)]{SV}) with a simplicial complex with only one vertex
(which is Cohen-Macaulay~\cite[Example II.2.14(ii)]{SV} and so
Buchsbaum) is not Buchsbaum. In \cite{STY} it is shown that $\Delta
*\Delta'$ is Buchsbaum (over $K$) if and only if $\Delta$ and
$\Delta'$ are Cohen-Macaulay (over $K$). Therefore, it is natural to
ask about $\Delta$ and $\Delta'$ when $\Delta*\Delta'$ is ${\rm
CM}_t$. At present these authors do not know the answer.

\section{The $k$-${\rm CM}_t$ simplicial complexes}

In this section $k$-${\rm CM}_t$ complexes are introduced and some
of their basic properties are given. We show that a complex is
$k$-${\rm CM}_t$ if and only if the links of its nonempty faces are
$k$-${\rm CM}_{t-1}$ ( see Proposition \ref{link1}). The main result
of this section is Theorem \ref{delta1} which states that certain
subcomplex of a ${\rm CM}_t$ complex is $2$-${\rm CM}_t$. This leads
to prove that for an integer $s\le d$, the $(d-s-1)$-skeleton of a
$(d-1)$-dimensional $k$-${\rm CM}_t$ complex is $(k+s)$-${\rm CM}_t$
(see Corollary \ref{3sec3}).

\begin{defn}

Let $K$ be a field. For positive integer $k$ and non-negative
integer $t$, a simplicial complex $\Delta$ with vertex set $V$ is
called $k$-${\rm CM}_t$ of dimension $r$ over $K$ if for any subset
$W$ of $V$ (including $\varnothing$) with $\#W<k$,
$\Delta_{V\setminus W}$ is ${\rm CM}_t$ of dimension $r$ over $K$.
The complex $\Delta$ is $k$-${\rm CM}_t$ over $K$ if $\Delta$ is
$k$-${\rm CM}_t$ of some dimension $r$ over $K$.

\end{defn}
\noindent Note that for any $\ell\le k$, $k$-${\rm CM}_t$ implies
$\ell$-${\rm CM}_t$. In particular, any $k$-${\rm CM}_t$ is ${\rm
CM}_t$.

\vspace{.1in}

In the rest of this paper we will often need the following lemma \cite[Lemma 2.3]{Mi}.

\begin{lem}\label{link}

Let $\Delta$ be a simplicial complex with vertex set $V$. Let
$W\subseteq V$ and let $\sigma$ be a face in $\Delta$. If
$W\cap\sigma=\varnothing$, then $\lk_{\Delta_{V\setminus
W}}(\sigma)=\lk_\Delta(\sigma)_{V\setminus W}$.

\end{lem}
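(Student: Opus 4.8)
\begin{lem}
Let $\Delta$ be a simplicial complex with vertex set $V$. Let $W\subseteq V$ and let $\sigma$ be a face in $\Delta$. If $W\cap\sigma=\varnothing$, then $\lk_{\Delta_{V\setminus W}}(\sigma)=\lk_\Delta(\sigma)_{V\setminus W}$.
\end{lem}

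Let me understand the objects. $\Delta_{V\setminus W}$ is the induced subcomplex on vertices $V\setminus W$, i.e., the faces of $\Delta$ contained in $V\setminus W$. The claim is an equality of two simplicial complexes, so the natural approach is a double-inclusion argument at the level of faces, unwinding the definitions of "link" and "induced subcomplex."

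Let me write out both sides. A face $\tau$ belongs to $\lk_{\Delta_{V\setminus W}}(\sigma)$ iff $\tau\cap\sigma=\varnothing$ and $\tau\cup\sigma\in\Delta_{V\setminus W}$. By definition of induced subcomplex, $\tau\cup\sigma\in\Delta_{V\setminus W}$ means $\tau\cup\sigma\in\Delta$ and $\tau\cup\sigma\subseteq V\setminus W$. On the other side, $\tau\in\lk_\Delta(\sigma)_{V\setminus W}$ means $\tau\in\lk_\Delta(\sigma)$ and $\tau\subseteq V\setminus W$, i.e., $\tau\cap\sigma=\varnothing$, $\tau\cup\sigma\in\Delta$, and $\tau\subseteq V\setminus W$.

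So both conditions share $\tau\cap\sigma=\varnothing$ and $\tau\cup\sigma\in\Delta$; the only difference is that the left side requires $\tau\cup\sigma\subseteq V\setminus W$ while the right side requires only $\tau\subseteq V\setminus W$. These are equivalent precisely because of the hypothesis $W\cap\sigma=\varnothing$, which gives $\sigma\subseteq V\setminus W$. Thus $\tau\cup\sigma\subseteq V\setminus W$ iff $\tau\subseteq V\setminus W$. This is the only place the hypothesis is used, and it is the crux of the whole argument — without $\sigma$ already avoiding $W$, the two subcomplexes genuinely differ.

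Since this is an elementary set-theoretic verification, there is no serious obstacle: I would simply chain these equivalences and conclude equality of face sets, hence equality of the two simplicial complexes.
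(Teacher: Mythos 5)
Your proof is correct. The paper itself offers no argument for this lemma --- it is simply quoted from Miyazaki \cite[Lemma 2.3]{Mi} --- and your double-inclusion unwinding of the two definitions, with the hypothesis $W\cap\sigma=\varnothing$ used exactly where it must be (to make the condition $\tau\cup\sigma\subseteq V\setminus W$ equivalent to $\tau\subseteq V\setminus W$), is the standard and essentially the only proof of this statement.
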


\begin{rem}

One may call $\Delta$ a $(k$-${\rm CM})_t$ complex if for all
$\sigma\in \Delta$ with $\#\sigma \ge t$, $\lk_\Delta(\sigma)$ is
$k$-Cohen-Macaulay. But this is the same as $k$-${\rm CM}_t$
property because both properties require that for $W\subset V$ with
$\#W<k$, $\lk_{\Delta_{V\setminus
W}}(\sigma)=\lk_\Delta(\sigma)_{V\setminus W}$ is Cohen-Macaulay.

\end{rem}

\begin{lem}\label{link2}
Let $\Delta$ be a $k$-${\rm CM}_t$ complex and let $\sigma\in\Delta$ be an arbitrary face with $\#\sigma=s$. Then $\lk_\Delta(\sigma)$ is $k$-${\rm CM}_{t-s}$.

\end{lem}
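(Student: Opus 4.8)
The plan is to prove the statement first for $k=1$ and then bootstrap to arbitrary $k$ by means of the restriction lemma, Lemma \ref{link}. Throughout I write $V$ for the vertex set of $\Delta$ and keep in mind the convention ${\rm CM}_j={\rm CM}_0$ for $j\le 0$, so that when $s\ge t$ the claim reduces to saying that $\lk_\Delta(\sigma)$ is $k$-Cohen-Macaulay.

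\emph{Step 1 (the case $k=1$).} First I would establish that if $\Delta$ is ${\rm CM}_t$ and $\#\sigma=s$, then $\lk_\Delta(\sigma)$ is ${\rm CM}_{t-s}$, arguing by induction on $s$. The case $s=0$ is immediate since $\lk_\Delta(\varnothing)=\Delta$, and the case $s=1$ is precisely the forward direction of Lemma \ref{link-link}. For the inductive step I would write $\sigma=\tau\cup\{x\}$ with $\#\tau=s-1$ and $x\notin\tau$; since $\sigma\in\Delta$ we have $\{x\}\in\lk_\Delta(\tau)$. The induction hypothesis makes $\lk_\Delta(\tau)$ a ${\rm CM}_{t-s+1}$ complex, so applying Lemma \ref{link-link} to $\lk_\Delta(\tau)$ at the vertex $x$ and using the identity $\lk_{\lk_\Delta(\tau)}(\{x\})=\lk_\Delta(\tau\cup\{x\})=\lk_\Delta(\sigma)$ shows that $\lk_\Delta(\sigma)$ is ${\rm CM}_{(t-s+1)-1}={\rm CM}_{t-s}$. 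Purity of the successive links costs nothing here, since being ${\rm CM}_j$ already includes purity.

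\emph{Step 2 (general $k$).} Now I would take $\Delta$ to be $k$-${\rm CM}_t$ of dimension $d-1$ and set $L=\lk_\Delta(\sigma)$ with vertex set $V_L$. Every vertex of $L$ lies outside $\sigma$, so $V_L\cap\sigma=\varnothing$. Fixing $W\subseteq V_L$ with $\#W<k$, we then have $W\cap\sigma=\varnothing$, and Lemma \ref{link} yields
\[
\lk_{\Delta_{V\setminus W}}(\sigma)=\lk_\Delta(\sigma)_{V\setminus W}=L_{V_L\setminus W}.
\]
By hypothesis $\Delta_{V\setminus W}$ is ${\rm CM}_t$ of dimension $d-1$ and $\sigma$ is still one of its faces, so applying Step 1 to $\Delta_{V\setminus W}$ and $\sigma$ shows that $L_{V_L\setminus W}$ is ${\rm CM}_{t-s}$. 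Since $\Delta_{V\setminus W}$ is pure of dimension $d-1$ and the link of its $s$-element face $\sigma$ is again pure (being ${\rm CM}_{t-s}$), this link has dimension $d-s-1$ independently of $W$. Letting $W$ range over all subsets with $\#W<k$ then gives that $L$ is $k$-${\rm CM}_{t-s}$ of dimension $d-s-1$.

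I expect the computational core to be light, with the real care concentrated in the bookkeeping of Step 2: verifying that restricting $L$ to $V\setminus W$ agrees with restricting to $V_L\setminus W$, that the dimension $d-s-1$ is the \emph{same} for every admissible $W$ so that the fixed-dimension clause in the definition of $k$-${\rm CM}_{t-s}$ is met, and that the degenerate range $t-s\le 0$ is read correctly through the ${\rm CM}_{\le 0}={\rm CM}_0$ convention. A slightly faster route would bypass Step 1 by invoking the $(k\text{-}{\rm CM})_t$ reformulation of the Remark together with $\lk_{\lk_\Delta(\sigma)}(\rho)=\lk_\Delta(\sigma\cup\rho)$ and $\#(\sigma\cup\rho)\ge t$; however, it still demands the same purity and dimension check, so I would prefer the self-contained argument above, which rests only on Lemmas \ref{link-link} and \ref{link}.
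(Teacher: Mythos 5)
Your proposal is correct and follows essentially the same route as the paper: both restrict via Lemma \ref{link} (using $\sigma\cap W=\varnothing$) to identify $\lk_\Delta(\sigma)_{V_1\setminus W}$ with $\Gamma=\lk_{\Delta_{V\setminus W}}(\sigma)$, and then deduce that $\Gamma$ is ${\rm CM}_{t-s}$ from the fact that $\Delta_{V\setminus W}$ is ${\rm CM}_t$. The only difference is organizational: the paper checks the ${\rm CM}_{t-s}$ property of $\Gamma$ directly in one line (for $\tau\in\Gamma$ with $\#\tau\ge t-s$ one has $\lk_\Gamma(\tau)=\lk_{\Delta_{V\setminus W}}(\sigma\cup\tau)$, which is Cohen--Macaulay since $\#(\sigma\cup\tau)\ge t$), whereas you reach the same fact by induction on $s$ through Lemma \ref{link-link} -- an unwinding of the identical identity -- and your explicit purity and fixed-dimension bookkeeping is actually more careful than what the paper records.
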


\begin{proof}

Let $V_1$ be the vertex set of $\lk_\Delta(\sigma)$ and consider
$W\subset V_1$ with $\#W<k$. We need to show that,
$(\lk_\Delta(\sigma))_{V_1\setminus W}$ is ${\rm CM}_{t-s}$. Observe
that since $\sigma \cap W = \varnothing$,
$\lk_\Delta(\sigma)_{V_1\setminus W}=\lk_\Delta(\sigma)_{V\setminus
W}= \lk_{\Delta_{V\setminus W}}(\sigma)$. Put $\Gamma =
\lk_{\Delta_{V\setminus W}}(\sigma)$ and let $\tau\in\Gamma$ with
$\#\tau\ge t-s$. Then $\#(\sigma\cup\tau) \ge t$ and
$\lk_\Gamma(\tau)= \lk_{\Delta_{V\setminus W}}(\sigma\cup \tau)$,
which is Cohen-Macaulay by assumption.
\end{proof}

\begin{cor}

Let $\Delta$ be a $k$-Buchsbaum ($k$-${\rm CM}_2$) complex and let $\sigma\in\Delta$ be a non-empty face. Then $\lk_\Delta(\sigma)$ is $k$-Cohen-Macaulay (resp. $k$-Buchsbaum).

\end{cor}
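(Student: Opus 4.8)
The plan is to derive this corollary directly from Lemma~\ref{link2}, whose content already does all the work; the only remaining task is to translate the index arithmetic into the classical names via the stated conventions. Recall that ${\rm CM}_0$ coincides with Cohen-Macaulayness and ${\rm CM}_1$ with the Buchsbaum property, and that by convention ${\rm CM}_t$ means ${\rm CM}_0$ whenever $t\le 0$. Consequently $k$-${\rm CM}_0$ is precisely $k$-Cohen-Macaulay and $k$-${\rm CM}_1$ is precisely $k$-Buchsbaum.

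First I would set $s=\#\sigma$ and note that, since $\sigma$ is nonempty, $s\ge 1$. Applying Lemma~\ref{link2}, for a $k$-${\rm CM}_t$ complex the link $\lk_\Delta(\sigma)$ is $k$-${\rm CM}_{t-s}$. In the first case $\Delta$ is $k$-Buchsbaum, i.e.\ $k$-${\rm CM}_1$, so $\lk_\Delta(\sigma)$ is $k$-${\rm CM}_{1-s}$; as $s\ge 1$ we have $1-s\le 0$, and the convention gives $k$-${\rm CM}_{1-s}=k$-${\rm CM}_0$, that is, $k$-Cohen-Macaulay. In the second case $\Delta$ is $k$-${\rm CM}_2$, so $\lk_\Delta(\sigma)$ is $k$-${\rm CM}_{2-s}$ with $2-s\le 1$; hence it is $k$-${\rm CM}_1$ when $s=1$ and $k$-${\rm CM}_0$ when $s\ge 2$. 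Since $k$-${\rm CM}_0$ implies $k$-${\rm CM}_1$ (the monotonicity ${\rm CM}_i\Rightarrow{\rm CM}_j$ for $j\ge i$ passes to each deletion $\Delta_{V\setminus W}$), in either subcase $\lk_\Delta(\sigma)$ is $k$-Buchsbaum.

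There is essentially no obstacle here: the statement is a pure corollary, and all the combinatorial and homological substance already resides in Lemma~\ref{link2}. The only points demanding a little care are the bookkeeping of the degenerate indices $t-s\le 0$, which the convention disposes of, and the use of the $t$-monotonicity to downgrade the stronger $k$-Cohen-Macaulay conclusion to the advertised $k$-Buchsbaum one in the range $s\ge 2$ of the second case.
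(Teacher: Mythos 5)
Your proof is correct and takes essentially the same approach as the paper: there the corollary is stated immediately after Lemma~\ref{link2} with no separate argument, precisely because it follows by the index bookkeeping you spell out, using the stated convention that ${\rm CM}_t$ means ${\rm CM}_0$ for $t\le 0$ and the monotonicity ${\rm CM}_i\Rightarrow{\rm CM}_j$ for $j\ge i$ applied to each deletion $\Delta_{V\setminus W}$.
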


\begin{prop}\label{link1}

Let $\Delta$ be a pure complex of dimension $(d-1)$ with vertex set $V$. Then for all positive integers $k$ and $t$ the following are equivalent:

\begin{itemize}

\item[(i)] $\Delta$ is $k$-${\rm CM}_t$.

\item[(ii)] For any non-empty face $\sigma$ in $\Delta$, $\lk_\Delta(\sigma)$ is a $k$-${\rm CM}_{t-1}$.

\end{itemize}

\end{prop}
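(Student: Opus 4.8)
The plan is to prove the two implications separately, treating this proposition as the $k$-graded analogue of Lemma \ref{link-link}. For (i)$\Rightarrow$(ii) I would simply invoke Lemma \ref{link2}: if $\sigma$ is a non-empty face with $\#\sigma=s\ge 1$, then $\lk_\Delta(\sigma)$ is $k$-${\rm CM}_{t-s}$; since $t-s\le t-1$ and, as noted after the definition, $k$-${\rm CM}_i$ implies $k$-${\rm CM}_j$ whenever $j\ge i$, it follows that $\lk_\Delta(\sigma)$ is $k$-${\rm CM}_{t-1}$. This direction is essentially immediate.

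The substance lies in (ii)$\Rightarrow$(i). By definition I must show that for every $W\subseteq V$ with $\#W<k$ the restriction $\Delta_{V\setminus W}$ is ${\rm CM}_t$ of dimension $d-1$; once each such restriction is handled uniformly, the conclusion that $\Delta$ is $k$-${\rm CM}_t$ (of dimension $d-1$) follows. Fix such a $W$ and set $\Gamma=\Delta_{V\setminus W}$. I plan to verify the ${\rm CM}_t$ property of $\Gamma$ through Lemma \ref{link-link}, i.e.\ by showing $\Gamma$ is pure of dimension $d-1$ and that $\lk_\Gamma(\{x\})$ is ${\rm CM}_{t-1}$ for every vertex $x$ of $\Gamma$. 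The crucial translation is Lemma \ref{link}: since $x\notin W$, we have $\lk_\Gamma(\{x\})=\lk_{\Delta_{V\setminus W}}(\{x\})=\lk_\Delta(\{x\})_{V\setminus W}$. Because $\lk_\Delta(\{x\})$ has its vertices among those of $\Delta$, restricting it to $V\setminus W$ amounts to deleting at most $\#W<k$ of its own vertices; hence hypothesis (ii), which says $\lk_\Delta(\{x\})$ is $k$-${\rm CM}_{t-1}$, yields exactly that $\lk_\Delta(\{x\})_{V\setminus W}$ is ${\rm CM}_{t-1}$ of dimension $d-2$.

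It remains to secure purity and the correct dimension of $\Gamma$, which I expect to be the only delicate point. First I would observe that (ii) forces $\Delta$ to have enough vertices: since each $\lk_\Delta(\{x\})$ is $k$-${\rm CM}_{t-1}$ of dimension $d-2$, deleting up to $k-1$ of its vertices must still leave a complex of dimension $d-2$, so $V\setminus W$ continues to contain vertices of $\Delta$. For any such vertex $x$, the link $\lk_\Gamma(\{x\})$ is pure of dimension $d-2$ by the previous paragraph. Then for any facet $F$ of $\Gamma$ and any vertex $x\in F$, the set $F\setminus\{x\}$ is a facet of $\lk_\Gamma(\{x\})$, whence $\#(F\setminus\{x\})=d-1$ and $\#F=d$; thus every facet of $\Gamma$ has dimension $d-1$, so $\Gamma$ is pure of dimension $d-1$. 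With purity established and all vertex-links shown to be ${\rm CM}_{t-1}$, Lemma \ref{link-link} gives that $\Gamma=\Delta_{V\setminus W}$ is ${\rm CM}_t$ of dimension $d-1$. As $W$ was an arbitrary subset with $\#W<k$, this proves that $\Delta$ is $k$-${\rm CM}_t$.

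The main obstacle, then, is not the homological content---which is packaged into Lemmas \ref{link-link}, \ref{link} and \ref{link2}---but the bookkeeping that guarantees each deletion $\Delta_{V\setminus W}$ remains pure of the full dimension $d-1$ and that the vertex sets of the links interact correctly with the deleted set $W$. Care is needed to rule out the degenerate situation in which deleting $W$ removes all vertices, which the dimension hypothesis on the links precisely prevents.
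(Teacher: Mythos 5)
Your proposal is correct, and while its global structure (fix $W$ with $\#W<k$, use Lemma \ref{link} to commute restriction with links, then check purity) parallels the paper's, the key reduction you use is genuinely different: the paper verifies the definition of ${\rm CM}_t$ for $\Delta_{V\setminus W}$ directly, taking an arbitrary face $\sigma$ with $\#\sigma\ge t$ and asserting that $\lk_{\Delta_{V\setminus W}}(\sigma)=(\lk_\Delta(\sigma))_{V\setminus W}$ is Cohen--Macaulay ``since $\lk_\Delta(\sigma)$ is $k$-${\rm CM}_{t-1}$,'' whereas you verify the vertex-link criterion of Lemma \ref{link-link}. Your route is actually the safer one: as literally written, the paper's step only yields that $(\lk_\Delta(\sigma))_{V\setminus W}$ is ${\rm CM}_{t-1}$, which for $t\ge 2$ is weaker than Cohen--Macaulay; to close that step one must factor $\sigma$ as $\{x\}\cup(\sigma\setminus\{x\})$ and apply the ${\rm CM}_{t-1}$ property of $(\lk_\Delta(\{x\}))_{V\setminus W}$ to the face $\sigma\setminus\{x\}$ of size $\ge t-1$ --- and this factorization is exactly what the proof of Lemma \ref{link-link} packages, so your appeal to that lemma supplies the missing justification automatically. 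The purity arguments also differ in flavor but are both sound: the paper computes $\dim(\lk_\Delta(\tau)_{V\setminus W})$ for a facet $\tau$ of the restriction and uses dimension-preservation to force $\#\tau=d$, while you observe that $F\setminus\{x\}$ is a facet of the pure $(d-2)$-dimensional complex $\lk_\Gamma(\{x\})$. Finally, your explicit attention to the degenerate possibility that $W$ swallows all vertices is a point the paper silently ignores (and indeed, when $d=1$ and $k$ exceeds the number of vertices, hypothesis (ii) is vacuously satisfied while (i) fails, so some such nondegeneracy, e.g.\ $d\ge 2$, is genuinely needed); your argument that surviving vertices exist works precisely when $d\ge 2$.
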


\begin{proof}
(i)$\Rightarrow$ (ii): Use Lemma \ref{link2}.

(ii)$\Rightarrow$ (i): For any subset $W$ of $V$ with $\#W<k$, we
need to show that $\Delta_{V\setminus W}$ is ${\rm CM}_t$ of
dimension $d-1$. Let $\sigma\in\Delta_{V\setminus W}$ with
$\#\sigma\ge t$. Then $\lk_{\Delta_{V\setminus
W}}(\sigma)=(\lk_\Delta(\sigma))_{V\setminus W}$. Since
$\lk_\Delta(\sigma)$ is a $k$-${\rm CM}_{t-1}$ we have that
$\lk_{\Delta_{V\setminus W}}(\sigma)$ is Cohen-Macaulay.

Now we show that $\Delta_{V\setminus W}$ is pure of dimension
$(d-1)$. Let $\tau$ be an arbitrary facet in $\Delta_{V\setminus
W}$. Since $\lk_\Delta(\tau)$ is a $k$-${\rm CM}_{t-1}$ complex, we
have
$$\dim(\lk_\Delta(\tau)_{V\setminus W})=\dim(\lk_\Delta(\tau)).$$ On
the other hand since $\Delta$ is pure, we have
$\dim(\lk_\Delta(\tau))=d-\#\tau-1.$ In addition,
$$\dim(\lk_\Delta(\tau)_{V\setminus W})=\dim(\lk_{\Delta_{V\setminus
W}}(\tau))=\dim(\{\varnothing\})=-1.$$ Therefore, we have
$\dim(\tau)=d-1$.

\end{proof}

\begin{cor} (see \cite[Lemma 4.2]{Mi}) Let $\Delta$ be a pure complex of dimension $(d-1)$ with vertex set
$V$. Then for all positive integers $k$ the following are
equivalent:

\begin{itemize}

\item[(i)] $\Delta$ is $k$-Buchsbaum.

\item[(ii)] For any non-empty face $\sigma$ in $\Delta$, $\lk_\Delta(\sigma)$ is a $k$-Cohen-Macaulay complex.

\end{itemize}

\end{cor}

Now we are ready to give one of the main results of this paper which
generalizes results due to Hibi \cite{Hi} and Miyazaki \cite{Mi}.

Let $\Delta$ a simplicial complex and let
$\sigma_1,\cdots,\sigma_\ell$ be faces of $\Delta$ such that
\begin{itemize}

\item[(i)] $\sigma_i\cup\sigma_j\notin\Delta$ if $i\neq j$.

\item[(ii)] If $\Delta_1=\{\tau\in\Delta|\tau\nsupseteq\sigma_i\,\,\mbox{for all $i$}\,\}$ then $\dim\Delta_i<\dim\Delta$.

\end{itemize}

In \cite{Hi} Hibi showed that $\Delta_1$ is 2-Cohen-Macaulay of
dimension $(\dim\Delta-1)$ provided that $\lk_\Delta(\sigma_i)$ is
2-Cohen-Macaulay for all $i$. In \cite{Mi} Miyazaki extended this
result for Buchsbaumness by showing that if $\Delta$ is a Buchsbaum
complex of dimension $d-1$, and $\lk_\Delta(\sigma_i)$ is
2-Cohen-Macaulay for all $i$, then $\Delta_1$ is 2-Buchsbaum. Now it
is natural to ask whether the similar result is valid for $CM_t$
complexes. In the following result we give an affirmative answer to
this question.

\begin{thm}\label{delta1}

Let $\Delta$ be a ${\rm CM}_t$ complex and let
$\sigma_1,\cdots,\sigma_\ell$ be faces of $\Delta$ satisfying the
above conditions (i) and (ii). If $\lk_\Delta(\sigma_i)$ is 2-${\rm
CM}_{t-1}$ for all $i$, then $\Delta_1$ is 2-${\rm CM}_t$ complex of
dimension $(\dim\Delta-1)$.

\end{thm}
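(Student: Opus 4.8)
The plan is to establish the $2$-${\rm CM}_t$ property of $\Delta_1$ by verifying the two defining conditions: that $\Delta_1$ is ${\rm CM}_t$ of dimension $(\dim\Delta-1)$, and that $(\Delta_1)_{V\setminus\{v\}}$ is ${\rm CM}_t$ of the same dimension for every vertex $v$. Throughout I would lean on Theorem \ref{Reisner}, which reduces the ${\rm CM}_t$ property to the vanishing $\tilde{H}_i(\lk_\Delta(\sigma);K)=0$ for $\sigma$ with $\#\sigma\ge t$ and $i<\dim\Delta-\#\sigma-1$, together with purity. This converts everything into a problem about reduced homology of links.

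First I would analyze the links of faces $\sigma\in\Delta_1$. The key structural observation is that $\Delta$ is built from $\Delta_1$ by attaching, for each $i$, the collection of faces containing $\sigma_i$; condition (i) guarantees these attachments are disjoint in the sense that no face contains two distinct $\sigma_i$. For a face $\sigma\in\Delta_1$, the link $\lk_{\Delta_1}(\sigma)$ relates to $\lk_\Delta(\sigma)$ by deletion of the faces that, within $\lk_\Delta(\sigma)$, contain some $\sigma_i\setminus\sigma$. I expect a Mayer--Vietoris or long-exact-sequence argument comparing $\lk_\Delta(\sigma)$, $\lk_{\Delta_1}(\sigma)$, and the relevant pieces $\lk_\Delta(\sigma\cup\sigma_i)=\lk_{\lk_\Delta(\sigma)}(\sigma_i)$; the hypotheses that $\Delta$ is ${\rm CM}_t$ and that each $\lk_\Delta(\sigma_i)$ is $2$-${\rm CM}_{t-1}$ (hence, by Lemma \ref{link2} applied to links, the nested links are suitably Cohen-Macaulay) feed the vanishing of the outer terms and force the desired vanishing for $\lk_{\Delta_1}(\sigma)$. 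Purity of $\Delta_1$ in dimension $\dim\Delta-1$ is exactly condition (ii).

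To upgrade from ${\rm CM}_t$ to $2$-${\rm CM}_t$, I would show that deleting any single vertex $v$ preserves both purity and the homological vanishing. Using Lemma \ref{link}, deletion commutes with taking links: $(\lk_{\Delta_1}(\sigma))_{V\setminus\{v\}}$ can be re-expressed as a link inside the deleted complex, so the same exact-sequence comparison applies verbatim, \emph{provided} the outer terms still vanish after deletion. This is precisely where the strengthened hypotheses earn their keep: $\Delta$ being ${\rm CM}_t$ only gives Cohen-Macaulayness, but the requirement that $\lk_\Delta(\sigma_i)$ be $2$-${\rm CM}_{t-1}$ (rather than merely ${\rm CM}_{t-1}$) ensures the attached pieces retain their Cohen-Macaulay links after one vertex is removed, so the long exact sequence continues to pin the middle term to zero.

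The main obstacle will be the homological comparison in the deleted complex, since $\Delta$ itself is only assumed ${\rm CM}_t$ and not $2$-${\rm CM}_t$, so the ambient link $\lk_{\Delta_{V\setminus\{v\}}}(\sigma)$ need not be Cohen-Macaulay in the dangerous degrees. The resolution I anticipate is that the degeneracy is confined to the attached simplices governed by the $\sigma_i$, and the $2$-${\rm CM}_{t-1}$ hypothesis on each $\lk_\Delta(\sigma_i)$ exactly cancels this defect in the long exact sequence; carefully tracking the index shift $\tilde{H}_i(\lk_{\lk_\Delta(\sigma)}(\sigma_i))=\tilde{H}_i(\lk_\Delta(\sigma\cup\sigma_i))$ through the dimension bookkeeping (so that the relevant homology of $\Delta_1$'s links lands in the Cohen-Macaulay range after subtracting one from the dimension) will be the delicate computational core, and I would organize it as a single Mayer--Vietoris sequence whose two outer terms are killed by the two separate hypotheses on $\Delta$ and on the $\lk_\Delta(\sigma_i)$.
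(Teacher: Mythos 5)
Your overall strategy---a direct homological verification via Theorem \ref{Reisner} plus a Mayer--Vietoris comparison---is not the paper's route, and as written it has two genuine gaps. First, your claim that purity of $\Delta_1$ in dimension $\dim\Delta-1$ ``is exactly condition (ii)'' is false: condition (ii) only asserts $\dim\Delta_1<\dim\Delta$, i.e., that no facet of $\Delta$ survives into $\Delta_1$; it says nothing about all facets of $\Delta_1$ having equal dimension. Purity of $\Delta_1$ is a substantive fact, which the paper imports from \cite[Lemma 7.2]{Mi}, and its proof uses the Cohen--Macaulayness of $\Delta$ together with the structure of the $\sigma_i$, not condition (ii) alone. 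Second, and more seriously, the homological core of your argument is never exhibited: you ``expect'' a long exact sequence whose outer terms vanish, and ``anticipate'' that after deleting a vertex the damage is confined to the attached pieces. Making this precise---covering $\Delta$ by $\Delta_1$ and the subcomplexes $\{\tau\in\Delta:\tau\cup\sigma_i\in\Delta\}$ (each the join of the full simplex on $\sigma_i$ with $\lk_\Delta(\sigma_i)$, meeting $\Delta_1$ in the join of $\partial\sigma_i$ with $\lk_\Delta(\sigma_i)$), controlling the top-degree terms in the shifted range $i<\dim\Delta-2$, and treating separately the cases where the deleted vertex does or does not lie in some $\sigma_i$---is precisely the content of Hibi's theorem (the case $t=0$) and of Miyazaki's Theorem 7.4 (the case $t=1$), both of which are long arguments. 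Your sketch defers exactly this part, so what you have is a plan for a proof, not a proof; moreover the plan, if completed, would amount to reproving those two known theorems in addition to the new cases.

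The paper proceeds quite differently and avoids all homology computations. It inducts on $t$, taking \cite{Hi} and \cite[Theorem 7.4]{Mi} as black-box base cases $t=0,1$. For the inductive step it uses Proposition \ref{link1} (a pure complex is 2-${\rm CM}_t$ if and only if every link of a nonempty face is 2-${\rm CM}_{t-1}$) together with the key observation that the construction is self-similar on links: for a nonempty face $\tau\in\Delta_1$, the link $\lk_{\Delta_1}(\tau)$ is obtained from $\Gamma=\lk_\Delta(\tau)$ by deleting the faces containing $\tau_i=\sigma_i\setminus\tau$ (for those $i$ with $\sigma_i\cup\tau\in\Delta$), and these data satisfy the hypotheses of the theorem with $t$ replaced by $t-1$, since $\lk_\Gamma(\tau_i)=\lk_{\lk_\Delta(\sigma_i)}(\tau\setminus\sigma_i)$ is 2-${\rm CM}_{t-2}$ by Lemma \ref{link2}. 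The induction hypothesis then gives that $\lk_{\Delta_1}(\tau)$ is 2-${\rm CM}_{t-1}$, and Proposition \ref{link1} finishes. If you wish to salvage your approach, the efficient fix is to notice this self-similarity and run the induction on $t$, rather than to redo and generalize the homological work of Hibi and Miyazaki from scratch.
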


\begin{proof}

We use induction on $t$. If $t=0,1$ the assertion hold by \cite{Hi}
and \cite[Theorem 7.4]{Mi}. Assume that the assertion holds for
$t-1$. By Lemma \ref{link1} we need to show that $\Delta_1$ is pure
and for any non-empty face $\tau$ in $\Delta_1$,
$\lk_{\Delta_1}(\tau)$ is 2-${\rm CM}_{t-1}$. By \cite[Lemma
7.2]{Mi}, $\Delta_1$ is pure. Let $\tau$ be a non-empty face in
$\Delta_1$. We may reorder $\sigma_i$'s such that
$\sigma_i\cup\tau\in\Delta$ if and only if $i\le s$. Then

\[ \begin{array}{rl}
\lk_{\Delta_1}(\tau)= & \{\sigma\in\Delta|\sigma\cup\tau\in\Delta_1, \sigma\cap\tau=\varnothing\}\\
= & \{\sigma\in\Delta|\sigma\cup\tau\in\Delta, \sigma\cap\tau=\varnothing, \sigma\cup\tau\nsupseteq\sigma_i (1\le i\le \ell)\}\\
= &  \{\sigma\in\Delta|\sigma\cup\tau\in\Delta, \sigma\cap\tau=\varnothing, \sigma\nsupseteq\tau_i (1\le i\le s)\}
\end{array}\]
where $\tau_i=\sigma_i-\tau$ for $1\le i\le s$. Thus if we put $\Gamma = \lk_\Delta(\tau)$ then
$$\lk_{\Delta_1}(\tau) =  \{\sigma\in\Gamma|\sigma\nsupseteq\tau_i (1\le i\le s)\}.$$
On the other hand,
$$\lk_{\Gamma}(\tau_i) = \lk_{\Delta}(\tau\cup \tau_i) = \lk_{\lk_{\Delta}(\sigma_i)} (\tau-\sigma_i).$$
By assumption $\lk_{\Delta}(\sigma_i)$ is 2-${\rm CM}_{t-1}$. Then by Lemma \ref{link2}, $\lk_{\lk_{\Delta}(\sigma_i)}(\tau-\sigma_i)$ is
2-${\rm CM}_{t-2}$ and hence $\lk_{\Gamma}(\tau_i)$ is 2-${\rm CM}_{t-2}$. Applying the induction hypothesis for $\Gamma$ and
$\tau_1, \cdots, \tau_s$ it follows that $\lk_{\Delta_1}(\tau)$ is 2-${\rm CM}_{t-1}$. Since $\tau$ is an arbitrary non-empty face of $\Delta_1$,
by Lemma \ref{link1}, it follows that $\Delta_1$ is a 2-${\rm CM}_t$
complex of dimension $(\dim\Delta-1)$.
\end{proof}

The condition on $\lk_\Delta(\sigma_i)$ in the above theorem can not
be weakened in the sense that one can not replace ${\rm CM}_{t-1}$
by ${\rm CM}_t$ for these links. This can be seen in the following
example.

\begin{exam} (see \cite[Example 7.5]{Mi}) If $$\Delta_1=<\{1,2\}, \{1,3\}, \{2,3\}, \{1,4\}, \{1,5\}, \{4,5\}>,$$ which has dimension 1,
and $\Delta_2=<\{x,y\}>$, then $\Delta=\Delta_1*\Delta_2$ is Cohen-Macaulay. If we put $\sigma_1=\{x,y\}$, $t=1$, then $\lk_\Delta(\sigma_1)=\Delta_1$
is a 2-Buchsbaum complex and $\Delta\setminus\sigma_1=\Delta_1*<\{x\},\{y\}>$. So $\lk_{\Delta\setminus\sigma_1}(\{x\})=\Delta_1$ is not 2-Cohen-Macaulay
and we see that $\Delta\setminus\sigma_1$ is not 2-Buchsbaum.

\end{exam}

\begin{cor}\label{3sec3}

Let $\Delta$ be a $k$-${\rm CM}_t$ complex of dimension $(d-1)$. If $s\le d$ and $\Delta'$ is the $(d-s-1)$-skeleton of $\Delta$, then $\Delta'$ is $(k+s)$-${\rm CM}_t$.

\end{cor}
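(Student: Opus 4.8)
The plan is to prove the statement by induction on $s$, reducing everything to the single skeleton step $s=1$ and then iterating. Since iterated skeleta compose and $\Delta$ is pure (being $k$-${\rm CM}_t$), the $(d-s-1)$-skeleton of $\Delta$ equals the $(d-s-1)$-skeleton of the $(d-s)$-skeleton of $\Delta$; thus once I know that removing one skeleton level raises the ${\rm CM}_t$-connectivity index by one, I simply apply this $s$ times. Concretely, the inductive heart is the claim: \emph{if $\Gamma$ is $m$-${\rm CM}_t$ of dimension $r$ with $m\ge 1$, then its $(r-1)$-skeleton $\Gamma'$ is $(m+1)$-${\rm CM}_t$ of dimension $r-1$.} Granting this, starting from the $k$-${\rm CM}_t$ complex $\Delta$ of dimension $d-1$ and applying it $s$ times yields that the $(d-s-1)$-skeleton is $(k+s)$-${\rm CM}_t$. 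The hypothesis $s\le d$ guarantees that every dimension $r=d-1,d-2,\dots,d-s$ encountered in the chain stays $\ge 0$, so each step is legitimate; the base case $s=0$ is vacuous.

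To prove the single-step claim I would unwind the definition of $(m+1)$-${\rm CM}_t$: I must show that for every $W\subseteq V$ with $\#W\le m$ the restriction $(\Gamma')_{V\setminus W}$ is ${\rm CM}_t$ of dimension $r-1$. The key elementary observation is that taking skeleta commutes with passing to induced subcomplexes, so $(\Gamma')_{V\setminus W}$ is exactly the $(r-1)$-skeleton of $\Gamma_{V\setminus W}$. When $\#W\le m-1$ we have $\#W<m$, so $\Gamma_{V\setminus W}$ is ${\rm CM}_t$ of dimension $r$, and I apply Theorem \ref{delta1} to it taking $\sigma_1,\dots,\sigma_\ell$ to be \emph{all} its facets. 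Two distinct facets have union of cardinality exceeding $r+1$, so condition (i) holds; deleting the facets leaves precisely the faces of dimension $\le r-1$, i.e. the $(r-1)$-skeleton, of dimension $r-1<r$, so condition (ii) holds; and the link of a facet is the trivial complex $\{\varnothing\}$ consisting only of the empty face, which has no vertices and is readily checked to be $2$-${\rm CM}_{t-1}$. Theorem \ref{delta1} then gives that the $(r-1)$-skeleton is $2$-${\rm CM}_t$, in particular ${\rm CM}_t$ of dimension $r-1$.

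The remaining and most delicate case is $\#W=m$, where $\Gamma_{V\setminus W}$ need not be ${\rm CM}_t$, so Theorem \ref{delta1} does not apply directly; this is where the index increment must be paid for. I would peel off one vertex: write $W=W_0\cup\{v\}$ with $\#W_0=m-1$, so $\Gamma_{V\setminus W_0}$ is ${\rm CM}_t$ of dimension $r$ and, by the previous paragraph, its $(r-1)$-skeleton $\Sigma$ is $2$-${\rm CM}_t$. Using again the commutation of skeleta with restriction together with the transitivity of restriction, one identifies $(\Gamma')_{V\setminus W}$ with $\Sigma_{(V\setminus W_0)\setminus\{v\}}$, the deletion of a single vertex from $\Sigma$; since $\Sigma$ is $2$-${\rm CM}_t$ and $\#\{v\}=1<2$, this deletion is again ${\rm CM}_t$ of dimension $r-1$. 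This completes the single-step claim and hence the corollary. I expect the main obstacle to be precisely this bookkeeping in the top case $\#W=m$: one must verify the two commutation/transitivity identities for skeleta and restrictions, and exploit that the conclusion of Theorem \ref{delta1} is the \emph{strengthened} $2$-${\rm CM}_t$ (not merely ${\rm CM}_t$), which is exactly what lets one extra vertex be deleted and the index climb from $m$ to $m+1$.
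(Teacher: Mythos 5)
Your proposal is correct and takes essentially the same route as the paper's proof: reduce to the single skeleton step $s=1$, apply Theorem \ref{delta1} (with all facets of the vertex-deleted complex as the $\sigma_i$'s, whose links $\{\varnothing\}$ are trivially $2$-${\rm CM}_{t-1}$) to conclude that the codimension-one skeleton is $2$-${\rm CM}_t$, and then spend that extra strength to delete one more vertex, which is exactly how the connectivity index climbs from $k$ to $k+1$. The only differences are cosmetic: the paper treats every nonempty $W$ with $\#W\le k$ uniformly by peeling off a vertex $x\in W$ rather than splitting into the cases $\#W\le m-1$ and $\#W=m$, and it leaves implicit the verification of hypotheses (i), (ii) and the $2$-${\rm CM}_{t-1}$ condition on the links of facets, which you spell out.
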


\begin{proof}

We may assume $s=1$. Let $V$ be the vertex set of $\Delta$ and $W$ a
subset of $V$ such that $0<\#W<k+1$. If we take $x\in W$ and put
$W'=W\setminus\{x\}$, then $\Delta_{V\setminus W'}$ is ${\rm CM}_t$
of dimension $(d-1)$ by assumption. On the other hand since
$$\Delta'_{V\setminus W'}=\{\sigma\in\Delta|\dim(\sigma)<d-1, \sigma\cap W'=\varnothing\}$$
and this is equal to the $(d-2)$-skeleton of $\Delta_{V\setminus
W'}$, by Theorem \ref{delta1}, $(\Delta')_{V\setminus W'}$ is
2-${\rm CM_t}$ of dimension $(d-2)$. So $(\Delta')_{V\setminus
W'-\{x\}}=(\Delta')_{V\setminus W}$ is a ${\rm CM}_t$ complex of
dimension $(d-2)$.

\end{proof}

\begin{rem}

The above corollary generalizes a result of Terai and Hibi \cite{TH}
(see also \cite{Ba}) which states that the 1-skeleton of a
simplicial $(d-1)$-sphere with $d\ge 2$ is $d$-connected
(topological). This is just due to the fact that a simplicial
$(d-1)$-sphere is 2-Cohen-Macaulay and $(d-1)$-Cohen-Macaulayness
implies $(d-1)$-connectedness. This corollary also generalizes a
result of Hibi \cite{Hi} (see \cite[Introduction]{Mi}) which says
that if $\Delta$ is a Cohen-Macaulay complex of dimension $d-1$,
then the $(d-2)$-skeleton of $\Delta$ is 2-Cohen-Macaulay.

\end{rem}

\begin{exam}

If  $\Gamma$ is a finite union of $(d-1)$-simplicies where any two
of them intersect in a face of dimension at most $t-2$ and $\Lambda$
is the $(d-2)$-skeleton of $\Gamma$, then $\Lambda$ is 2-${\rm
CM}_t$. If at least two facets in $\Gamma$ intersect in a
$(t-2)$-dimensional face, then $\Lambda$ is not 2-${\rm CM}_{t-1}$.

\end{exam}

\section*{Acknowledgments}
This paper was carried out during Yassemi and Zaare-Nahandi's visit
of Institut de Math\'{e}matiques de Jussieu, Universit\'{e} Pierre
et Marie Curie (Paris 6). They would like to thank the authorities
of the Institut de Math\'{e}matiques de Jussieu, specially Professor
Chardin and Professor Waldschmidt for their hospitality during their
stay in this institute. The authors are grateful to Volkmar Welker
for suggesting several useful comments.

\end{document}